\documentclass[reqno, 11pt, oneside]{amsart}

\usepackage{latexsym, amsmath,amssymb}
\usepackage{hyperref}
\hypersetup{colorlinks=true, citecolor=cyan, linkcolor=blue}

\usepackage[initials, alphabetic,msc-links]{amsrefs}
\usepackage{graphicx}
\usepackage{xcolor}
\usepackage{enumitem}

\usepackage[latin1]{inputenc}

 \numberwithin{equation}{section}

\usepackage{amsmath}

\def\Xint#1{\mathchoice
{\XXint\displaystyle\textstyle{#1}}%
{\XXint\textstyle\scriptstyle{#1}}%
{\XXint\scriptstyle\scriptscriptstyle{#1}}%
{\XXint\scriptscriptstyle% 
\scriptscriptstyle{#1}}%
\!\int}
\def\XXint#1#2#3{{\setbox0=\hbox{$#1{#2#3}{%
\int}$ }
\vcenter{\hbox{$#2#3$ }}\kern-.6\wd0}}

\def\dashint{\Xint-}

\renewcommand{\epsilon}{\varepsilon}
\newtheorem{theorem}{Theorem}

\newtheorem{lemma}[theorem]{Lemma}

\newtheorem{proposition}[theorem]{Proposition}

\newtheorem{remark}[theorem]{Remark}
\newcommand{\bth}{\begin{theorem}}
\newcommand{\ble}{\begin{lemma}}
\newcommand{\bcor}{\begin{corr}}

\newcommand{\bdeff}{\begin{deff}}

\newcommand{\bprop}{\begin{proposition}}
\newcommand{\ele}{\end{lemma}}
\newcommand{\ecor}{\end{corr}}
\newcommand{\edeff}{\end{deff}}

\numberwithin{theorem}{section}

\newcommand{\eprop}{\end{proposition}}

\newcommand{\supp}{\text{supp }}
\renewcommand{\Pi}{\varPi}

\renewcommand{\epsilon}{\varepsilon}

\newcommand{\R}{{\mathbb R}}

\newcommand{\pa}[1]{\left(#1\right)}
\newcommand{\norm}[1]{\left\|#1\right\|}

\usepackage{esint}

\usepackage{comment}

\allowdisplaybreaks

\def\vint_#1{\mathchoice%
        {\mathop{\kern 0.2em\vrule width 0.6em height 0.69678ex depth -0.58065ex
                \kern -0.8em \intop}\nolimits_{\kern -0.4em#1}}%
        {\mathop{\kern 0.1em\vrule width 0.5em height 0.69678ex depth -0.60387ex
                \kern -0.6em \intop}\nolimits_{#1}}%
        {\mathop{\kern 0.1em\vrule width 0.5em height 0.69678ex
            depth -0.60387ex
                \kern -0.6em \intop}\nolimits_{#1}}%
        {\mathop{\kern 0.1em\vrule width 0.5em height 0.69678ex depth -0.60387ex
                \kern -0.6em \intop}\nolimits_{#1}}}
\def\vintslides_#1{\mathchoice%
        {\mathop{\kern 0.1em\vrule width 0.5em height 0.697ex depth -0.581ex
                \kern -0.6em \intop}\nolimits_{\kern -0.4em#1}}%
        {\mathop{\kern 0.1em\vrule width 0.3em height 0.697ex depth -0.604ex
                \kern -0.4em \intop}\nolimits_{#1}}%
        {\mathop{\kern 0.1em\vrule width 0.3em height 0.697ex depth -0.604ex
                \kern -0.4em \intop}\nolimits_{#1}}%
        {\mathop{\kern 0.1em\vrule width 0.3em height 0.697ex depth -0.604ex
                \kern -0.4em \intop}\nolimits_{#1}}}

\newcommand{\aveint}[2]{\mathchoice%
        {\mathop{\kern 0.2em\vrule width 0.6em height 0.69678ex depth -0.58065ex
                \kern -0.8em \intop}\nolimits_{\kern -0.45em#1}^{#2}}%
        {\mathop{\kern 0.1em\vrule width 0.5em height 0.69678ex depth -0.60387ex
                \kern -0.6em \intop}\nolimits_{#1}^{#2}}%
        {\mathop{\kern 0.1em\vrule width 0.5em height 0.69678ex depth -0.60387ex
                \kern -0.6em \intop}\nolimits_{#1}^{#2}}%
        {\mathop{\kern 0.1em\vrule width 0.5em height 0.69678ex depth -0.60387ex
                \kern -0.6em \intop}\nolimits_{#1}^{#2}}}

\newcommand{\dist}{\operatorname{dist}}

\def\Xint#1{\mathchoice
  {\XXint\displaystyle\textstyle{#1}}%
  {\XXint\textstyle\scriptstyle{#1}}%
  {\XXint\scriptstyle\scriptscriptstyle{#1}}%
  {\XXint\scriptscriptstyle\scriptscriptstyle{#1}}%
  \!\int}
\def\XXint#1#2#3{{\setbox0=\hbox{$#1{#2#3}{\int}$}
    \vcenter{\hbox{$#2#3$}}\kern-.5\wd0}}

\newcommand{\vertiii}[1]{{\left\vert\kern-0.25ex\left\vert\kern-0.25ex\left\vert #1 
    \right\vert\kern-0.25ex\right\vert\kern-0.25ex\right\vert}}

\newcommand{\vertii}[1]{{\left\vert\kern-0.25ex\left\vert\kern-0.25ex  #1 
    \kern-0.25ex\right\vert\kern-0.25ex\right\vert}}

\makeatletter
\@namedef{subjclassname@2020}{\textup{2020} Mathematics Subject Classification}
\makeatother

\usepackage{mathabx}
\usepackage{geometry}

\begin{document}

\title[Bourgain--Brezis--Mironescu formula for Riesz Potentials]
{Bourgain--Brezis--Mironescu formula for Riesz Potentials}

\author[A. Claros]{Alejandro Claros}

\address[A. Claros]{BCAM -- Basque Center for Applied Mathematics, Bilbao, Spain\newline
Universidad del Pa\'is Vasco / Euskal Herriko Unibertsitatea (UPV/EHU), Bilbao, Spain}

\email{aclaros@bcamath.org, aclaros003@ikasle.ehu.eus}

\author[C. P\'erez]{Carlos P\'erez}

\address[C. P\'erez]{BCAM -- Basque Center for Applied Mathematics, Bilbao, Spain\newline
Universidad del Pa\'is Vasco / Euskal Herriko Unibertsitatea (UPV/EHU), Bilbao, Spain\newline
Ikerbasque, Bilbao, Spain}

\email{cperez@bcamath.org}

\thanks{A. Claros is supported by the Basque Government through the BERC 2022-2025 program, by the Ministry of Science and Innovation through Grant PRE2021-099091 funded by BCAM Severo Ochoa accreditation CEX2021-001142-S/MICIN/AEI/10.13039/501100011033 and by ESF+, and by the project PID2023-146646NB-I00 funded by MICIU/AEI/10.13039/501100011033 and by ESF+. 
\\ \hspace*{1.5em}C. P\'erez is supported by the Spanish government through the grant PID2023-146646NB-I00 and by Severo Ochoa accreditation CEX2021-001142-S, both at BCAM, and also by the Basque Government through grant IT1615-22 at the University of the Basque Country and by the BERC programme 2022-2025 at BCAM}

\subjclass[2020]{Primary 46E35; Secondary 42B35}

%\date{\today}

%\dedicatory{}

\keywords{Riesz potential, nonlinear fractional derivative, Sobolev spaces}

\begin{abstract}
We identify the Bourgain--Brezis--Mironescu pointwise limit of the nonlocal potential operator $(1-\alpha)\, I_\alpha(\mathcal D^\alpha f)$, $0<\alpha<1$, where $I_\alpha$ denotes the Riesz potential and $\mathcal D^\alpha$ a nonlinear fractional differential operator. Specifically, for every $f\in C_c^\infty(\R^n)$ and every $x\in\R^n$, we show that
\begin{equation*}
	\lim_{\alpha\to 1^-} (1-\alpha)\, I_\alpha(\mathcal D^\alpha f)(x) = K_n\, I_1(|\nabla f|)(x),
\end{equation*}
where $K_n$ is the geometric constant appearing in the well-known Bourgain--Brezis--Mironescu formula \cite{BBM2}. By a density argument, we further extend this result to every $f\in W^{1,1}(\R^n)$, obtaining almost everywhere convergence along subsequences.
\end{abstract}

\maketitle

\section{Introduction and statement of the main result}

A classical starting point in Sobolev theory is the subrepresentation formula
\begin{equation}\label{eq:subrep-classical}
|f(x)| \lesssim_n I_1(|\nabla f|)(x),
\end{equation}
valid for $f\in C_c^\infty(\R^n)$, with $n\ge2$.  Here, for $\alpha\in (0,n)$, the Riesz potential is defined by
\begin{equation*}
I_\alpha f(x):=\gamma_{n,\alpha}\int_{\R^n}\frac{f(y)}{|x-y|^{n-\alpha}}\,dy,
\end{equation*}
where $\gamma_{n,\alpha}$ is a normalization constant chosen so that $\mathcal F(I_\alpha f)(\xi)=|\xi|^{-\alpha}\mathcal F(f)(\xi)$, where $\mathcal F(f)(\xi)=\int_{\R^n} f(x) \, e^{-ix\cdot \xi}\,dx$ denotes the Fourier transform.  Explicitly, this constant is given by 
\begin{equation}\label{Defgamma}
	\gamma_{n,\alpha}=\frac{\Gamma\bigl(\tfrac{n-\alpha}{2}\bigr)}{2^\alpha \pi^{\frac{n}{2}}\Gamma\bigl(\tfrac{\alpha}{2}\bigr)}.
\end{equation}
The estimate \eqref{eq:subrep-classical} can be derived from the classical local $(1,1)$-Poincar\'e inequality on balls (or cubes) via a standard chain argument, which converts local mean oscillation control into a pointwise potential estimate. Recently, several extensions of \eqref{eq:subrep-classical} have been obtained. On the one hand, the authors in \cite{HMP2} refine the left-hand side through the substitution $f \mapsto Tf$, with $T$ ranging from various maximal operators to singular integrals with rough kernels. On the other hand, \cite{HMP} improves the right-hand side by replacing it with a smaller operator (see \eqref{eq:HMP} below). This improvement of \eqref{eq:subrep-classical} is  closely related to the very interesting nonlinear fractional differential operator introduced by D. Spector in \cite{SpectorJFA}:
\begin{equation*}
\mathcal{D}^\alpha f(x):=\int_{\R^n}\frac{|f(x)-f(y)|}{|x-y|^{n+\alpha}},dy,
\qquad 0<\alpha<1,
\end{equation*}
which  serves as a nonlocal ``fractional gradient" of order $\alpha$. This operator is of interest for several reasons, one of them is the recent fractional subrepresentation formula established in \cite{HMP} (see \eqref{eq:frac-subrep} below). On the other hand, it preserves some of the structural properties of first-order differential operators. For instance, it satisfies the following Leibniz-type inequality (see \cite{NahasPonce}),
\begin{equation*}
	\norm{\mathcal{D}^\alpha (fg)}_{L^1(\R^n)} \le \norm{f\, \mathcal{D}^\alpha g}_{L^1(\R^n)} +\norm{g\, \mathcal{D}^\alpha f}_{L^1(\R^n)}.
\end{equation*}

The link between the fractional scale and the gradient scale is given by the Bourgain--Brezis--Mironescu (BBM) formula \cite{BBM2}\footnote{The original proof of the BBM formula was stated and proved in \cite{BBM2} for smooth bounded domains $\Omega \subset \R^n$. The result can be extended to the whole $\R^n$, see \cite[Appendix A]{BSY} and \cite{Kaushik}.}. For $f\in C^\infty_c(\R^n)$, it identifies the limit of the fractional Gagliardo seminorm
\begin{equation}\label{eq:BBM-limit}
\lim_{\alpha\to1^-}(1-\alpha)\int_{\R^n}\int_{\R^n}\frac{|f(x)-f(y)|}{|x-y|^{n+\alpha}}\,dx\,dy =K_n\int_{\R^n}|\nabla f(x)|\,dx,
\end{equation}
where
\begin{equation}\label{eq:Kn}
K_n:=\int_{\mathbb{S}^{n-1}}|\omega\cdot e|\,d\sigma(\omega)
\end{equation}
and $e\in \mathbb{S}^{n-1}$ is arbitrary. In particular, the factor $(1-\alpha)$ is crucial, otherwise the limit is not even finite for nonconstant functions (see \cite{BrezisConstant}). BBM-type limits have also been investigated in extension domains \cite{KKP}, in arbitrary bounded domains, where boundary effects require modifications of the inner integral in the seminorm \cite{DrelichmanDuran}, and in fully arbitrary domains with $W^{\alpha,p}_q$ seminorms \cite{Kaushik}, which are related to Triebel--Lizorkin spaces. Further extensions have been obtained in fractional Orlicz--Sobolev spaces \cite{Orlicz,Orlicz2,Orlicz3}, in the setting of interpolation spaces \cite{DominguezMilman}, in ball Banach function spaces \cite{DachunYang, ZhuYangYuan2023}, in connection with Triebel--Lizorkin spaces \cite{BSY}, and for anisotropic fractional energies \cite{FernandezSalort}.

In \cite{BBM1}, the following fractional Poincar\'e inequality on cubes is proved with the BBM extra term $(1-\alpha)$. For every cube $Q\subset\R^n$ and every $f\in C^\infty(Q)$,
\begin{equation}\label{eq:BBM-Poincare}
\dashint_Q |f(x)-f_Q|\,dx \lesssim  (1-\alpha)\,\ell(Q)^\alpha\, \dashint_Q\int_Q \frac{|f(x)-f(y)|}{|x-y|^{n+\alpha}}\,dx\,dy, \qquad 0<\alpha<1,
\end{equation}
where $f_Q=\dashint_Q f$. Once \eqref{eq:BBM-Poincare} is available, a standard telescoping argument (see for instance \cite{FLW, HMPV}) yields the fractional subrepresentation formula
\begin{equation}\label{eq:frac-subrep}
|f(x)| \lesssim_n (1-\alpha)\, I_\alpha(\mathcal{D}^\alpha f)(x),
\qquad x\in\R^n,\quad f\in C^\infty_c(\R^n).
\end{equation}
as shown in \cite{HMP}.  In particular, $(1-\alpha)I_\alpha(\mathcal{D}^\alpha f)$ plays the role of a fractional version of the first-order Riesz potential of the gradient, in the spirit of \eqref{eq:subrep-classical}.  Interesting related subrepresentation formulae were established for $s$-John domains in \cite{DD2}, although without the corresponding BBM-type factor  $(1-\alpha)$.

An alternative motivation for studying $(1-\alpha)I_\alpha(\mathcal{D}^\alpha f)$ stems from the theory of rough singular integrals. In the recent work \cite{HMP}, it is established that, for every $\alpha \in (0,1)$, if $\Omega$ is homogeneous of degree zero, has vanishing average on $\mathbb{S}^{n-1}$, and belongs to the Marcinkiewicz space $L^{\frac{n}{\alpha},\infty}(\mathbb{S}^{n-1})$, then the maximal truncated rough operator
\begin{equation*}
	T^\star_{\Omega} f(x):=\sup_{\varepsilon>0}\Bigl|\int_{|x-y|>\varepsilon}\frac{\Omega(x-y)}{|x-y|^n}\,f(y)\,dy\Bigr|
\end{equation*}
satisfies the pointwise estimate
\begin{equation}\label{HMPsubrep}
T^\star_{\Omega} f(x)\leq c_n\, \|\Omega\|_{L^{\frac{n}{\alpha},\infty}(\mathbb S^{n-1})}\, (1-\alpha) \,I_{\alpha}(\mathcal{D}^{\alpha}f)(x), \qquad f\in C^\infty_c(\R^n).
\end{equation}
The same work also establishes that for every $f\in C_c^\infty(\R^n)$,
\begin{equation}\label{eq:HMP}
(1-\alpha)\, I_\alpha(\mathcal{D}^\alpha f)(x)\le C_{n,\alpha}\, I_1(|\nabla f|)(x),
\end{equation}
where the constant $C_{n,\alpha}$ bounded as $\alpha\to1^-$. 
Combining this with \eqref{eq:frac-subrep}, we obtain
\begin{equation*}
	|f(x)| \lesssim_n (1-\alpha)\, I_\alpha(\mathcal{D}^\alpha f)(x) \le C_{n,\alpha}\, I_1(|\nabla f|)(x)
\end{equation*}
for every $x\in \R^n$. The boundedness of $C_{n,\alpha}$ when $\alpha\to1^-$ in \eqref{eq:HMP} suggests that the operator $(1-\alpha)I_\alpha(\mathcal{D}^\alpha f)$ should have a meaningful limit as $\alpha\to 1^-$, in the spirit of \eqref{eq:BBM-limit}. 

Our main result confirms this intuition by identifying the pointwise BBM limit of the operator appearing on the left-hand side of \eqref{eq:HMP}.

\begin{theorem}\label{thm:1}
Let $f\in C_c^\infty(\R^n)$ with $n\ge 2$. Then for every $x\in\R^n$,
\begin{equation}\label{eq:main-limit}
\lim_{\alpha\to 1^-} (1-\alpha)\, I_\alpha(\mathcal{D}^\alpha f)(x) =K_n\, I_1(|\nabla f|)(x),
\end{equation}
where $K_n$ is given by \eqref{eq:Kn}.
\end{theorem}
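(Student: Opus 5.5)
We write $(1-\alpha)I_\alpha(\mathcal D^\alpha f)(x)=\gamma_{n,\alpha}\int_{\R^n}\frac{(1-\alpha)\mathcal D^\alpha f(y)}{|x-y|^{n-\alpha}}\,dy$ and use two facts. First, $\gamma_{n,\alpha}\to\gamma_{n,1}$ as $\alpha\to1^-$, since the Gamma factors are continuous there. Second, for each fixed $y$ we have the pointwise BBM limit
\begin{equation*}
\lim_{\alpha\to1^-}(1-\alpha)\mathcal D^\alpha f(y)=K_n|\nabla f(y)|.
\end{equation*}
Granting these, it suffices to justify passing the limit inside the integral by dominated convergence. The limiting integral is then $\gamma_{n,1}K_n\int|\nabla f(y)||x-y|^{1-n}dy=K_n I_1(|\nabla f|)(x)$.

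\textbf{Pointwise limit.} Fix $y$ and split $\mathcal D^\alpha f(y)$ into the regions $|h|<\delta$ and $|h|\ge\delta$, where $h=z-y$. On $|h|\ge\delta$ we use $|f(y)-f(z)|\le 2\|f\|_\infty$, so this part is $O(\delta^{-\alpha}/\alpha)$; multiplied by $(1-\alpha)$ it tends to $0$. On $|h|<\delta$ we write $f(y+h)-f(y)=\nabla f(y)\cdot h+O(\|D^2f\|_\infty|h|^2)$. The quadratic error contributes $(1-\alpha)\int_{|h|<\delta}|h|^{2-n-\alpha}dh=O((1-\alpha)\delta^{2-\alpha})\to0$. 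For the main term we pass to polar coordinates:
\begin{equation*}
(1-\alpha)\int_{|h|<\delta}\frac{|\nabla f(y)\cdot h|}{|h|^{n+\alpha}}\,dh=|\nabla f(y)|K_n(1-\alpha)\int_0^\delta r^{-\alpha}\,dr=|\nabla f(y)|K_n\,\delta^{1-\alpha}.
\end{equation*}
This tends to $K_n|\nabla f(y)|$ as $\alpha\to1^-$.

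\textbf{Domination.} This is the main obstacle, because $\mathcal D^\alpha f$ is not compactly supported. Let $\supp f\subset B(0,R)$ and take $\alpha\in[1/2,1)$. We bound $(1-\alpha)\mathcal D^\alpha f(y)$ uniformly in $\alpha$ by splitting at $|h|=1$. Near the origin, $|f(y+h)-f(y)|\le\|\nabla f\|_\infty|h|$ gives $(1-\alpha)\int_{|h|<1}|h|^{1-n-\alpha}dh\le c_n$. Away from the origin, the bound $2\|f\|_\infty|h|^{-n-\alpha}$ gives at most $c_n(1-\alpha)/\alpha\le c_n$. Hence $(1-\alpha)\mathcal D^\alpha f\le C$ everywhere.

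For $|y|\ge 2R$ we have $f(y)=0$, so $\mathcal D^\alpha f(y)=\int_{B(0,R)}|f(z)||y-z|^{-n-\alpha}dz\le C|y|^{-n-\alpha}\le C|y|^{-n-1/2}$. Setting $g(y)=C\min(1,|y|^{-n-1/2})$, we get $(1-\alpha)\mathcal D^\alpha f\le g$ for all $\alpha\in[1/2,1)$.

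It remains to dominate the kernel $|x-y|^{\alpha-n}$ uniformly in $\alpha$. If $|x-y|\le1$, then $|x-y|^{\alpha-n}\le|x-y|^{1/2-n}$, which is locally integrable. If $|x-y|>1$, then $|x-y|^{\alpha-n}\le|x-y|^{1-n}$. The dominating function is therefore
\begin{equation*}
g(y)\bigl(|x-y|^{1/2-n}\chi_{|x-y|\le1}+|x-y|^{1-n}\chi_{|x-y|>1}\bigr).
\end{equation*}
This is integrable: the local singularity has exponent $1/2-n>-n$, and at infinity the integrand decays like $|y|^{-n-1/2}|y|^{1-n}$, which is integrable since $n\ge2$ (indeed for any $n\ge1$).

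Dominated convergence now gives \eqref{eq:main-limit} for every $x$.
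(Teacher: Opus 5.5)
Your proposal is correct and follows essentially the same route as the paper: the pointwise BBM limit of $(1-\alpha)\mathcal D^\alpha f$ via a Taylor expansion and a near/far split, the uniform bound $(1-\alpha)\mathcal D^\alpha f\le C$ for $\alpha\in[\tfrac12,1)$, the decay $\mathcal D^\alpha f(y)\lesssim \|f\|_{L^1}|y|^{-n-\alpha}$ away from the support, and dominated convergence with the kernel singularity controlled by $|x-y|^{1/2-n}$. The differences are cosmetic. You dominate the kernel at a general $x$ instead of translating to $x=0$, and you cut at $|h|=\delta$ rather than $|h|=1$. Two small points to tighten: the constant in $g$ must be chosen large, depending on $R$ and $\|f\|$, to cover the annulus $1\le |y|<2R$; and $n\ge 2$ is genuinely needed so that $\gamma_{n,1}<\infty$, even though it is not needed for integrability.
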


\begin{remark}\label{rem:Kn}
The constant $K_n$ coincides with the geometric constant appearing in the BBM limit \eqref{eq:BBM-limit}, and it is independent of the choice of $e\in \mathbb{S}^{n-1}$ by rotation invariance. 
\end{remark}

In particular, Theorem \ref{thm:1} shows that the inequality \eqref{eq:HMP}, proved in \cite{HMP}, is asymptotically sharp as $\alpha\to 1^-$.

\subsection*{Outline of the paper}

The following section is devoted to several auxiliary lemmas. Section 3 is devoted to the proof of Theorem \ref{thm:1}. In Section 4, we extend the pointwise convergence in \eqref{eq:main-limit} from $C_c^\infty(\R^n)$ to $W^{1,1}(\R^n)$ by a density argument. Finally, we discuss further extensions, including an $L^p$-variant of the operator and its corresponding pointwise BBM limit.

\section{Auxiliary lemmas}
As a first step toward the proof of the main result, we analyze the pointwise limit of the fractional gradient $\mathcal{D}^\alpha f$. The following lemma describes this behavior for smooth functions.

\begin{lemma}\label{lem:1}
	Let $f\in C_c^2(\R^n)$, then for every $x\in \R^n$ we have
	\begin{equation*}
		\lim_{\alpha\to 1^-} (1-\alpha) \mathcal{D} ^\alpha f(x) = K_n\, |\nabla f(x)|. 	
	\end{equation*}
\end{lemma}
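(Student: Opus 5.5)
Fix $x\in\R^n$ and split the integral defining $\mathcal D^\alpha f(x)$ into the regions $|x-y|<1$ and $|x-y|\ge 1$, writing $h=y-x$. The far part is harmless:
\begin{equation*}
(1-\alpha)\int_{|h|\ge1}\frac{|f(x)-f(x+h)|}{|h|^{n+\alpha}}\,dh\le (1-\alpha)\,2\|f\|_\infty\,|\mathbb S^{n-1}|\int_1^\infty r^{-1-\alpha}\,dr=\frac{2(1-\alpha)}{\alpha}\|f\|_\infty|\mathbb S^{n-1}|,
\end{equation*}
which tends to $0$ as $\alpha\to1^-$. So the whole limit comes from the near part.

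For $|h|<1$, Taylor's theorem with $f\in C^2_c$ gives $f(x+h)-f(x)=\nabla f(x)\cdot h+R(x,h)$ with $|R(x,h)|\le \tfrac12\|D^2f\|_\infty|h|^2$. Hence
\begin{equation*}
\bigl|\,|f(x+h)-f(x)|-|\nabla f(x)\cdot h|\,\bigr|\le C|h|^2 .
\end{equation*}
The error contributes at most
\begin{equation*}
(1-\alpha)\,C\int_{|h|<1}|h|^{2-n-\alpha}\,dh=\frac{(1-\alpha)\,C\,|\mathbb S^{n-1}|}{2-\alpha}\to0 .
\end{equation*}

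It remains to compute the main term exactly. In polar coordinates $h=r\omega$,
\begin{equation*}
(1-\alpha)\int_{|h|<1}\frac{|\nabla f(x)\cdot h|}{|h|^{n+\alpha}}\,dh=(1-\alpha)\int_0^1 r^{-\alpha}\,dr\int_{\mathbb S^{n-1}}|\nabla f(x)\cdot\omega|\,d\sigma(\omega)=\int_{\mathbb S^{n-1}}|\nabla f(x)\cdot\omega|\,d\sigma(\omega),
\end{equation*}
since $(1-\alpha)\int_0^1r^{-\alpha}dr=1$. If $\nabla f(x)=0$ this is zero. Otherwise, rotation invariance (Remark \ref{rem:Kn}) with $e=\nabla f(x)/|\nabla f(x)|$ shows it equals $K_n|\nabla f(x)|$. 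Combining the three pieces proves the lemma.

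There is no real obstacle. The only point needing care is that the Taylor remainder is $O(|h|^2)$ rather than merely $o(|h|)$; this is where the $C^2$ hypothesis is used, and it makes the error integral converge uniformly in $\alpha$ near $1$.
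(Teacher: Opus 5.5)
Your proposal is correct and follows essentially the same route as the paper's proof. Both split at $|h|=1$ and discard the far part via $\|f\|_{L^\infty}$, control the near part with the second-order Taylor remainder (the error is $O\bigl((1-\alpha)/(2-\alpha)\bigr)$), and compute the main term exactly in polar coordinates to obtain $K_n|\nabla f(x)|$.
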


The previous result is implicit in \cite{BBM1}; we include a proof for the convenience of the reader. We refer to \cite{Kaushik} for a more general version involving general domains and mixed Sobolev seminorms $W_q^{\alpha, p}$.

\begin{proof}
	Fix $x\in \R^n$. We split the fractional derivative into two terms, the local and nonlocal parts,
	\begin{align*}
		(1-\alpha) \mathcal{D} ^\alpha f(x) = & (1-\alpha) \int_{\R^n} \frac{|f(x+h)-f(x)|}{|h|^{n+\alpha}}dh\\
		= & (1-\alpha) \int_{|h|<1} \frac{|f(x+h)-f(x)|}{|h|^{n+\alpha}}dh\\
		& +(1-\alpha) \int_{|h|\ge 1} \frac{|f(x+h)-f(x)|}{|h|^{n+\alpha}}dh\\
		= & I_1 + I_2.
	\end{align*}
	
	First, we observe that the nonlocal part does not contribute to the limit, 
	\begin{align*}
		I_2 \le & 2\|f\|_{L^\infty} (1-\alpha)\int_{|h|\ge 1} \frac{1}{|h|^{n+\alpha}}dh \\
		= & C_n \|f\|_{L^\infty} (1-\alpha)\int_{1}^\infty \frac{1}{r^{1+\alpha}}dr \\
		= & C_n \|f\|_{L^\infty}\frac{1-\alpha}{\alpha} \longrightarrow 0, 
	\end{align*}
	when $\alpha\to 1^-$. To study the local part, we consider the Taylor expansion of $f$ of order $2$,
	\begin{equation*}
		f(x+h)= f(x)+ \nabla f(x) \cdot h + R(h),
	\end{equation*}
	where the remainder term satisfies $|R(h)|\le \tfrac{1}{2}\|D^2 f\|_{L^\infty}\,|h|^2$ for $|h|<1$. Using the elementary inequality $\bigl||a+b|-|a|\bigr|\le |b|$ for $a,b\in\R$, with $a=\nabla f(x)\cdot h$ and $b=R(h)$, we obtain
	\begin{equation*}
		\bigl||f(x+h)-f(x)|-|\nabla f(x)\cdot h|\bigr|\le |R(h)|,\qquad |h|<1.
	\end{equation*}
	Therefore,
	\begin{equation}\label{eq:sandwich}
		|I_1-I_{11}|\le I_{12},
	\end{equation}
	where
	\begin{align*}
		I_{11}:=(1-\alpha)\int_{|h|<1}\frac{|\nabla f(x)\cdot h|}{|h|^{n+\alpha}}\,dh,
		\quad \text{ and } \quad
		I_{12}:=(1-\alpha)\int_{|h|<1}\frac{|R(h)|}{|h|^{n+\alpha}}\,dh.
	\end{align*}
	
	The remainder term satisfies
	\begin{align*}
		I_{12} &\le C_f(1-\alpha)\int_{|h|<1}\frac{|h|^2}{|h|^{n+\alpha}}\,dh\\
		&= C_f C_n (1-\alpha)\int_0^1 r^{1-\alpha}\,dr \\
		&= C_f C_n \frac{1-\alpha}{2-\alpha}\longrightarrow 0,
	\end{align*}
	when $\alpha\to 1^-$. Next, we compute $I_{11}$. If $\nabla f(x)=0$, then $I_{11}=0$ and \eqref{eq:sandwich} gives $0\le I_1\le I_{12}\longrightarrow 0$, and then $\lim_{\alpha\to 1^-}(1-\alpha)\mathcal{D}^\alpha f(x)=0=K_n|\nabla f(x)|$. Assume now that $\nabla f(x)\neq 0$. Using polar coordinates $h=r\omega$, we get
	\begin{align*}
		I_{11}
		&=(1-\alpha)\int_0^1\int_{\mathbb{S}^{n-1}}
		\frac{|\nabla f(x)\cdot (r\omega)|}{r^{n+\alpha}}\,r^{n-1}\,d\sigma(\omega)\,dr\\
		&=(1-\alpha)\left(\int_{\mathbb{S}^{n-1}}|\nabla f(x)\cdot \omega|\,d\sigma(\omega)\right)\int_0^1 r^{-\alpha}\,dr\\
		&=(1-\alpha)\left(\int_{\mathbb{S}^{n-1}}|\nabla f(x)\cdot \omega|\,d\sigma(\omega)\right)\frac{1}{1-\alpha}\\
		&=\int_{\mathbb{S}^{n-1}}|\nabla f(x)\cdot \omega|\,d\sigma(\omega).
	\end{align*}
	By homogeneity and rotational invariance,
	\begin{equation*}
		\int_{\mathbb{S}^{n-1}}|\nabla f(x)\cdot \omega|\,d\sigma(\omega)
		=|\nabla f(x)|\int_{\mathbb{S}^{n-1}}\left|\frac{\nabla f(x)}{|\nabla f(x)|}\cdot \omega\right|\,d\sigma(\omega)
		=K_n|\nabla f(x)|.
	\end{equation*}
	Therefore $I_{11}=K_n|\nabla f(x)|$ and, since $I_{12}\to 0$, \eqref{eq:sandwich} yields $\lim_{\alpha\to 1^-} I_1 = K_n|\nabla f(x)|$. Together with $I_2\to 0$ we conclude that
	\begin{equation*}
		\lim_{\alpha\to 1^-}(1-\alpha)\mathcal{D}^\alpha f(x)
		=\lim_{\alpha\to 1^-}(I_1+I_2)=K_n|\nabla f(x)|.
	\end{equation*}
	This concludes the proof. 
\end{proof}

\begin{remark}
As demonstrated in the proof, the nonlocal term vanishes in the limit. Actually, we have established the following result:
	\begin{equation*}
		\lim_{\alpha\to 1^-} (1-\alpha) \int_{B(x,r)} \frac{|f(x)-f(y)|}{|x-y|^{n+\alpha}}dy = K_n\, |\nabla f(x)|,
	\end{equation*}
	for every $x\in \R^n$ and every $0<r\le \infty$. In particular, if $\Omega$ is a domain and $\tau \in (0,1)$ we have proved 
	\begin{equation}\label{eq:domains}
		\lim_{\alpha\to 1^-} (1-\alpha) \int_{B(x,\tau \dist (x, \partial \Omega))} \frac{|f(x)-f(y)|}{|x-y|^{n+\alpha}}dy = K_n\, |\nabla f(x)|,
	\end{equation}
	for every $x\in \Omega$. 
\end{remark}

To justify the exchange of limits and integrals later in the proof of Theorem \ref{thm:1}, we need a uniform pointwise bound for $(1-\alpha) \mathcal{D}^\alpha f$ that holds uniformly for $\alpha$ close to $1$.

\begin{lemma}\label{lem:2}
	Let $f\in C^2_c(\R^n)$.  Then for each $\alpha \in (\tfrac{1}{2}, 1)$ and $x\in \R^n,$ we have
	\begin{equation*}
		(1-\alpha) \mathcal{D} ^\alpha f(x) \le C_n \pa{\|f\|_{L^\infty} +  \|\nabla f\|_{L^\infty}}.
	\end{equation*}
\end{lemma}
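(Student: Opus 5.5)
The plan is to split $\mathcal{D}^\alpha f(x)$ into a local and a nonlocal part, exactly as in the proof of Lemma \ref{lem:1}. The only change is that the Taylor expansion of order $2$ is replaced by the mean value theorem. This removes any dependence on second derivatives and leaves only $\|f\|_{L^\infty}$ and $\|\nabla f\|_{L^\infty}$. Write
\begin{equation*}
(1-\alpha)\mathcal{D}^\alpha f(x)=(1-\alpha)\int_{|h|<1}\frac{|f(x+h)-f(x)|}{|h|^{n+\alpha}}\,dh+(1-\alpha)\int_{|h|\ge1}\frac{|f(x+h)-f(x)|}{|h|^{n+\alpha}}\,dh .
\end{equation*}

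For the local part, the mean value theorem gives $|f(x+h)-f(x)|\le \|\nabla f\|_{L^\infty}|h|$. Integrating in polar coordinates then yields
\begin{equation*}
(1-\alpha)\,\|\nabla f\|_{L^\infty}\,|\mathbb{S}^{n-1}|\int_0^1 r^{-\alpha}\,dr=|\mathbb{S}^{n-1}|\,\|\nabla f\|_{L^\infty}.
\end{equation*}
The factor $(1-\alpha)$ cancels exactly against $\int_0^1 r^{-\alpha}dr=\frac{1}{1-\alpha}$, so this bound is uniform in $\alpha$.

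For the nonlocal part, as in Lemma \ref{lem:1}, we bound $|f(x+h)-f(x)|\le 2\|f\|_{L^\infty}$. This gives
\begin{equation*}
2|\mathbb{S}^{n-1}|\,\|f\|_{L^\infty}\,\frac{1-\alpha}{\alpha}\le 2|\mathbb{S}^{n-1}|\,\|f\|_{L^\infty},
\end{equation*}
using $\alpha>\tfrac12$ and $1-\alpha<\tfrac12$. Adding the two pieces gives the claim with $C_n=2|\mathbb{S}^{n-1}|$.

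There is no real obstacle. The only point needing care is to make sure no $\alpha$-dependent constant blows up. The singular radial integral near $0$ is exactly compensated by $(1-\alpha)$, and the tail integral is harmless once $\alpha$ is bounded away from $0$, which is why the restriction $\alpha\in(\tfrac12,1)$ is imposed.
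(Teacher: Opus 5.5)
Your proof is correct and follows essentially the same route as the paper: split at $|h|=1$, bound the local part by the mean value theorem so that $(1-\alpha)\int_0^1 r^{-\alpha}\,dr=1$ absorbs the singularity, and bound the tail by $2\|f\|_{L^\infty}(1-\alpha)/\alpha\le 2\|f\|_{L^\infty}$ using $\alpha>\tfrac12$. Your version also makes the constant explicit, $C_n=2|\mathbb{S}^{n-1}|$.
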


\begin{proof}
	We again split the fractional derivative into two terms: the local and nonlocal parts. We have
	\begin{align*}
		(1-\alpha) \mathcal{D} ^\alpha f(x) = &  (1-\alpha) \int_{|x-y|<1} \frac{|f(x)-f(y)|}{|x-y|^{n+\alpha}}dy \\
		&+ (1-\alpha) \int_{|x-y|\ge 1} \frac{|f(x)-f(y)|}{|x-y|^{n+\alpha}}dy\\
		= & I_1 + I_2.
	\end{align*}
	
	To estimate the local term, by the mean value theorem, we have
	\begin{align*}
		I_1 \le & C \| \nabla f\|_{L^\infty} (1-\alpha) \int_{|x-y|<1} \frac{|x-y|}{|x-y|^{n+\alpha}}dy \\
		= & C_n \| \nabla f\|_{L^\infty} (1-\alpha) \int_0^1 r^{-\alpha} dr\\
		= & C_n \| \nabla f\|_{L^\infty}.
	\end{align*}
	
	On the other hand, we use the fact that $f$ is bounded,
	\begin{align*}
		I_2 \le & 2\|f\|_{L^\infty} (1-\alpha) \int_{|x-y|\ge 1} \frac{1}{|x-y|^{n+\alpha}}dy\\
		 = & C_n \|f\|_{L^\infty} (1-\alpha) \int_1^\infty r^{-1-\alpha} dr \\
		 = & C_n \|f\|_{L^\infty} \frac{1-\alpha}{\alpha} \\
		 \le & C_n \|f\|_{L^\infty}
	\end{align*}
	where in the last inequality we use $\alpha \in (\tfrac{1}{2}, 1)$. 
\end{proof}

\begin{remark}
Let $\Omega\subset\R^n$ be an arbitrary bounded domain and fix $\tau\in(0,1)$. Combining \eqref{eq:domains} with the uniform bound from Lemma \ref{lem:2}, one may apply the dominated convergence theorem to obtain the well-known BBM limit in $\Omega$ with the truncated seminorm
\begin{equation*}
	\lim_{\alpha \to 1^-} (1-\alpha)\int_{\Omega}\int_{B(x,\tau\,\dist(x,\partial\Omega))}\frac{|f(x)-f(y)|}{|x-y|^{n+\alpha}}\,dy\,d\mu(x) =K_n\int_{\Omega}|\nabla f(x)|\,d\mu(x),
\end{equation*}
where $\mu$ is a locally finite measure. In particular, this recovers the main result of \cite{DrelichmanDuran} for $p=1$ and $f\in C_c^2(\R^n)$.
\end{remark}

\section{Proof of Theorem \texorpdfstring{\ref{thm:1}}{1.1}}

\begin{proof}[Proof of Theorem \ref{thm:1}]
	Fix $x\in\R^n$ and set $f_x(u):=f(u+x)$. Since both $\mathcal{D}^\alpha$ and the Riesz potential $I_\alpha$ are translation invariant, we have, for every $\alpha\in(0,1)$,
	\begin{equation*}
		(1-\alpha)\,I_\alpha(\mathcal{D}^\alpha f)(x)=(1-\alpha)\,I_\alpha(\mathcal{D}^\alpha f_x)(0).
	\end{equation*}
	Moreover, $|\nabla f_x(u)|=|\nabla f(u+x)|$, and therefore
	\begin{equation*}
		I_1(|\nabla f|)(x)=I_1(|\nabla f_x|)(0).
	\end{equation*}
	Hence, it is enough to prove \eqref{eq:main-limit} in the case $x=0$, applied to the translated function $f_x$. In what follows, we assume $x=0$ and write $f$ in place of $f_x$ for simplicity.
	
	For each $\alpha \in (0,1)$, define
	\begin{equation*}
		F_\alpha(y):=\gamma_{n,\alpha}(1-\alpha)\frac{\mathcal{D}^\alpha f(y)}{|y|^{n-\alpha}},
	\end{equation*}
	and
	\begin{equation*}
		F(y):=K_n\gamma_{n,1}\frac{|\nabla f(y)|}{|y|^{n-1}},
	\end{equation*}
	for $y\in\R^n\setminus\{0\}$, where $\gamma_{n,\alpha}$ is defined in \eqref{Defgamma}. By Lemma \ref{lem:1}, together with the continuity of $\gamma_{n,\alpha}$ as a function of $\alpha$, we have
	\begin{equation*}
		\lim_{\alpha\to1^-}F_\alpha(y)=F(y)
	\end{equation*}
	for every $y\in\R^n\setminus\{0\}$. Thus, it remains to show that
	\begin{equation}\label{eq:limint}
		\lim_{\alpha\to1^-}\int_{\R^n}F_\alpha(y)\,dy=\int_{\R^n}F(y)\,dy.
	\end{equation}
	To justify the interchange of limit and integral, we shall apply the dominated convergence theorem. Let $R_0>0$ be such that $\supp(f)\subset B(0,R_0)$, and decompose
	\begin{equation*}
		\R^n=A_1\cup A_2\cup A_3,
	\end{equation*}
	where
	\begin{equation*}
		A_1=B(0,1), \qquad A_2=\R^n\setminus B(0,2R_0+1), \qquad A_3=B(0,2R_0+1)\setminus B(0,1).
	\end{equation*}
	
	We begin with $y\in A_1$. If $\alpha\in(\tfrac12,1)$, then Lemma \ref{lem:2} gives
	\begin{align*}
		F_\alpha(y)
		&=\gamma_{n,\alpha}(1-\alpha)\frac{\mathcal{D}^\alpha f(y)}{|y|^{n-\alpha}}\\
		&\le C_n\frac{\|f\|_{L^\infty}+\|\nabla f\|_{L^\infty}}{|y|^{n-\alpha}}\\
		&\le C_{n,f}\frac{1}{|y|^{n-\frac12}}\\
		& =: h(y).
	\end{align*}
	Moreover, $h\in L^1(A_1)$, since
	\begin{equation*}
		\int_{A_1}h(y)\,dy=\int_{B(0,1)}\frac{1}{|y|^{n-\frac12}}\,dy
		=C_n\int_0^1r^{-\frac12}\,dr<\infty.
	\end{equation*}
	
	Next, let $y\in A_2$. Then $f(y)=0$, and therefore
	\begin{align*}
		\mathcal{D}^\alpha f(y)
		&=\int_{\R^n}\frac{|f(y)-f(z)|}{|y-z|^{n+\alpha}}\,dz\\
		&=\int_{\supp(f)}\frac{|f(z)|}{|y-z|^{n+\alpha}}\,dz\\
		&\le \int_{\supp(f)}|f(z)|\frac{2^{n+\alpha}}{|y|^{n+\alpha}}\,dz\\
		&=2^{n+\alpha}\|f\|_{L^1}\frac{1}{|y|^{n+\alpha}},
	\end{align*}
	because $z\in \supp(f)\subset B(0,R_0)$ and $|y|\ge 2R_0+1$, so 	
	\begin{equation*}
		|y-z|\ge |y|-|z|\ge |y|-R_0\ge \frac{|y|}{2}.
	\end{equation*}
	Hence, for $y\in A_2$ 
	\begin{align*}
		F_\alpha(y)
		&=\gamma_{n,\alpha}(1-\alpha)\frac{\mathcal{D}^\alpha f(y)}{|y|^{n-\alpha}}\\
		&\le \gamma_{n,\alpha}(1-\alpha)2^{n+\alpha}\|f\|_{L^1}\frac{1}{|y|^{n+\alpha}|y|^{n-\alpha}}\\
		&\le C_n\|f\|_{L^1}\frac{1}{|y|^{2n}}\\
		& =:g(y),
	\end{align*}
	and $g\in L^1(A_2)$, since
	\begin{equation*}
		\int_{A_2}g(y)\,dy
		=C_{n,f}\int_{\R^n\setminus B(0,2R_0+1)}\frac{1}{|y|^{2n}}\,dy
		=C_{n,f}\int_{2R_0+1}^\infty r^{-n-1}\,dr<\infty.
	\end{equation*}
	
	Finally, let $y\in A_3$. Since $|y|\ge 1$, Lemma \ref{lem:2} yields
	\begin{equation*}
		F_\alpha(y)=\gamma_{n,\alpha}(1-\alpha)\frac{\mathcal{D}^\alpha f(y)}{|y|^{n-\alpha}}\le C_{n,f},
	\end{equation*}
	for every $\alpha\in(\tfrac12,1)$. Since $|A_3|<\infty$, this gives an integrable dominating function on $A_3$.
	
	Combining the estimates on $A_1$, $A_2$, and $A_3$, we obtain an $L^1$-dominating function on $\R^n$. Therefore, the dominated convergence theorem yields \eqref{eq:limint} which completes the proof.
\end{proof}

\section{Extension to the Sobolev space \texorpdfstring{$W^{1,1}(\R^n)$}{W11}}

In this section, we extend Theorem \ref{thm:1} to the Sobolev space $W^{1,1}(\R^n)$ by a density argument. First, we prove the following inequality for $p=1$ and the global Gagliardo seminorm.

\begin{proposition}\label{prop:inverse}
	Let $f\in W^{1,1}(\R^n)$ and let $0<\alpha<1$. Then
	\begin{equation*}
		\alpha(1-\alpha)\int_{\R^n}\int_{\R^n} \frac{|f(x)-f(y)|}{|x-y|^{n+\alpha}}\,dy\,dx \le C_n \|f\|_{W^{1,1}(\R^n)}.
	\end{equation*}
\end{proposition}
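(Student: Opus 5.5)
We plan to prove the estimate first for $f\in C_c^\infty(\R^n)$ and then pass to general $f\in W^{1,1}(\R^n)$ by density and Fatou's lemma. After the change of variables $y=x+h$, Tonelli's theorem gives
\begin{equation*}
\int_{\R^n}\int_{\R^n} \frac{|f(x)-f(y)|}{|x-y|^{n+\alpha}}\,dy\,dx=\int_{\R^n}\frac{\|f(\cdot+h)-f\|_{L^1(\R^n)}}{|h|^{n+\alpha}}\,dh .
\end{equation*}
We then split the $h$-integral into the local region $|h|<1$ and the nonlocal region $|h|\ge1$, mirroring the proofs of Lemma \ref{lem:1} and Lemma \ref{lem:2}.

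For the local part we use the translation estimate $\|f(\cdot+h)-f\|_{L^1}\le |h|\,\|\nabla f\|_{L^1}$. For smooth $f$ this follows from writing $f(x+h)-f(x)=\int_0^1\nabla f(x+th)\cdot h\,dt$, integrating in $x$ and applying Fubini. Hence the local part is at most
\begin{equation*}
\|\nabla f\|_{L^1}\int_{|h|<1}|h|^{1-n-\alpha}\,dh=\frac{|\mathbb S^{n-1}|}{1-\alpha}\,\|\nabla f\|_{L^1}.
\end{equation*}
After multiplying by $\alpha(1-\alpha)\le 1$, this contributes at most $C_n\|\nabla f\|_{L^1}$.

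For the nonlocal part we use the trivial bound $\|f(\cdot+h)-f\|_{L^1}\le 2\|f\|_{L^1}$. This gives at most
\begin{equation*}
2\|f\|_{L^1}\,|\mathbb S^{n-1}|\int_1^\infty r^{-1-\alpha}\,dr=\frac{2|\mathbb S^{n-1}|}{\alpha}\,\|f\|_{L^1},
\end{equation*}
and multiplying by $\alpha(1-\alpha)\le1$ again yields $C_n\|f\|_{L^1}$. The factor $\alpha$ in the statement is exactly what absorbs the $1/\alpha$ blow-up as $\alpha\to0$, while the factor $(1-\alpha)$ absorbs the $1/(1-\alpha)$ blow-up as $\alpha\to1$. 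Adding the two parts gives the inequality for smooth compactly supported $f$ with $C_n=2|\mathbb S^{n-1}|$.

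For general $f\in W^{1,1}(\R^n)$ we take $f_k\in C_c^\infty$ with $f_k\to f$ in $W^{1,1}$, and pass to a subsequence so that $f_k\to f$ almost everywhere. Then the integrand $|f_k(x)-f_k(y)|/|x-y|^{n+\alpha}$ converges almost everywhere on $\R^n\times\R^n$. Fatou's lemma together with $\|f_k\|_{W^{1,1}}\to\|f\|_{W^{1,1}}$ transfers the bound to $f$. Alternatively, the translation estimate holds directly for $W^{1,1}$ functions by the same density argument, so this step is essentially routine. The only point needing some care is justifying the translation inequality $\|\tau_hf-f\|_{L^1}\le|h|\|\nabla f\|_{L^1}$, and this is standard.
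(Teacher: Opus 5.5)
Your proof is correct and follows the paper's route: the change of variables $y=x+h$, the split at $|h|=1$, the translation estimate $\|f(\cdot+h)-f\|_{L^1}\le |h|\,\|\nabla f\|_{L^1}$ on the local part, the triangle inequality on the nonlocal part, and absorbing the $1/(1-\alpha)$ and $1/\alpha$ factors with the prefactor $\alpha(1-\alpha)$. The only difference is cosmetic: you offer a Fatou argument for passing to $W^{1,1}(\R^n)$, whereas the paper simply extends the translation estimate to $W^{1,1}(\R^n)$ by density; either works.
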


This proposition is a consequence of the Gagliardo-Nirenberg interpolation inequality proved in \cite{BrezisMironescu} (see also \cite[(1.10)]{HLYY}). For completeness, we include a proof. 

\begin{proof}
	By the change of variables $y=x+h$,
	\begin{align*}
		\int_{\R^n}\int_{\R^n}\frac{|f(x)-f(y)|}{|x-y|^{n+\alpha}}\,dy\,dx = \int_{\R^n}\int_{\R^n}\frac{|f(x+h)-f(x)|}{|h|^{n+\alpha}}\,dx\,dh =: I_1+I_2,
	\end{align*}
	where in $I_1$ we integrate over $|h|<1$, while in $I_2$ we integrate over $|h|\ge 1$. For $I_1$, we use the following difference quotients estimate,
	\begin{equation*}
		\int_{\R^n}|f(x+h)-f(x)|\,dx \le |h|\int_{\R^n}|\nabla f(x)|\,dx,
	\end{equation*}
	for every $h\in\R^n$ (see for instance \cite[Proposition 9.3]{BrezisBook}). Indeed, this is immediate for $f\in C_c^\infty(\R^n)$ from
	\begin{equation*}
		f(x+h)-f(x)=\int_0^1 \nabla f(x+th)\cdot h\,dt,
	\end{equation*}
	and the general case follows by density in $W^{1,1}(\R^n)$. Hence
	\begin{align*}
		I_1 &\le \int_{|h|<1}\frac{1}{|h|^{n+\alpha-1}} \left(\int_{\R^n}|\nabla f(x)|\,dx\right)dh \le \frac{C_n}{1-\alpha}\int_{\R^n}|\nabla f(x)|\,dx.
	\end{align*}
	For $I_2$, the triangle inequality gives
	\begin{align*}
		I_2 &\le 2\int_{|h|\ge 1}\frac{dh}{|h|^{n+\alpha}}\int_{\R^n}|f(x)|\,dx \le \frac{C_n}{\alpha}\int_{\R^n}|f(x)|\,dx.
	\end{align*}
	Combining both estimates, we obtain
	\begin{equation*}
		\int_{\R^n}\int_{\R^n}\frac{|f(x)-f(y)|}{|x-y|^{n+\alpha}}\,dy\,dx \le \frac{C_n}{1-\alpha}\int_{\R^n}|\nabla f(x)|\,dx + \frac{C_n}{\alpha}\int_{\R^n}|f(x)|\,dx.
	\end{equation*}
	We conclude the proof by multiplying both sides by $\alpha(1-\alpha)$ and using that $\alpha < 1$ and $1-\alpha < 1$.
\end{proof}

We can now state the main result of this section.

\begin{theorem}
	Let $f\in W^{1,1}(\R^n)$ with $n\ge 2$. Then, for every sequence $\{\alpha_k\}_{k} \subset (0,1)$ with $\alpha_k \to 1^-$, there exists a subsequence $\{\alpha_{k_j}\}_{j}$ such that
	\begin{equation*}
		\lim_{j\to \infty} (1-\alpha_{k_j}) I_{\alpha_{k_j}} (\mathcal{D} ^{\alpha_{k_j}} f)(x) = K_n\, I_1(|\nabla f|)(x),
	\end{equation*}
	for almost every $x\in \R^n$.
\end{theorem}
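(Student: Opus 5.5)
The plan is to show that $T_\alpha f:=(1-\alpha)I_\alpha(\mathcal D^\alpha f)$ converges to $K_n I_1(|\nabla f|)$ in $L^1_{\mathrm{loc}}(\R^n)$ as $\alpha\to1^-$. Once this holds, every sequence $\alpha_k\to1^-$ gives convergence in $L^1(B(0,R))$ for each $R\in\mathbb N$. A diagonal extraction over $R=1,2,\dots$ then produces a single subsequence converging a.e. on $\R^n$, which is the statement. Local integrability of $I_1(|\nabla f|)$ for $|\nabla f|\in L^1$ (see below) also shows that the limit is finite a.e.

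\emph{Step 1: sublinearity.} Since $\bigl||f(x)-f(y)|-|g(x)-g(y)|\bigr|\le |(f-g)(x)-(f-g)(y)|$, we have $|\mathcal D^\alpha f-\mathcal D^\alpha g|\le \mathcal D^\alpha(f-g)$ pointwise. Positivity of $I_\alpha$ then gives $|T_\alpha f-T_\alpha g|\le T_\alpha(f-g)$. Similarly, $\bigl|I_1(|\nabla f|)-I_1(|\nabla g|)\bigr|\le I_1(|\nabla(f-g)|)$.

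\emph{Step 2: a local $L^1$ bound uniform in $\alpha\in(\tfrac12,1)$.} This is the key estimate. Fix $R>0$ and a nonnegative $h\in L^1$. By Tonelli,
\begin{equation*}
\int_{B(0,R)} I_\alpha h(x)\,dx=\gamma_{n,\alpha}\int_{\R^n}h(y)\int_{B(0,R)}\frac{dx}{|x-y|^{n-\alpha}}\,dy .
\end{equation*}
The inner integral is at most $\int_{B(y,R)}|x-y|^{\alpha-n}\,dx\le C_n\max(R,R^{1/2})$, uniformly in $y$ and in $\alpha\in(\tfrac12,1)$. Moreover $\gamma_{n,\alpha}$ stays bounded there. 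Taking $h=(1-\alpha)\mathcal D^\alpha g$ and applying Proposition \ref{prop:inverse} (with $\alpha>\tfrac12$), we get
\begin{equation*}
\int_{B(0,R)}T_\alpha g\le C_{n,R}\,\|g\|_{W^{1,1}},\qquad \int_{B(0,R)} I_1(|\nabla g|)\le C_{n,R}\,\|\nabla g\|_{L^1}.
\end{equation*}

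\emph{Step 3: the smooth case in $L^1_{\mathrm{loc}}$.} For $f_m\in C_c^\infty$, I upgrade Theorem \ref{thm:1} from pointwise convergence to $L^1(B(0,R))$ convergence by dominated convergence in $x$. This needs a bound $T_\alpha f_m(x)\le C$ uniform in $x\in\R^n$ and $\alpha\in(\tfrac12,1)$. Such a bound follows from the proof of Theorem \ref{thm:1} with the decomposition recentred at $x$. Lemma \ref{lem:2} bounds $(1-\alpha)\mathcal D^\alpha f_m$ uniformly. The bound $(1-\alpha)\mathcal D^\alpha f_m(y)\lesssim\|f_m\|_{L^1}|y|^{-n-\alpha}$ far from the support handles large $|y|$. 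Together these give an $x$-independent bound on $\int F_\alpha$, since $|x-y|^{\alpha-n}$ is integrable near $x$ uniformly in $x$.

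\emph{Step 4: conclusion.} Choose $f_m\in C_c^\infty$ with $f_m\to f$ in $W^{1,1}$. By Steps 1 and 2,
\begin{equation*}
\int_{B(0,R)}\bigl|T_\alpha f-K_nI_1|\nabla f|\bigr|\le C_{n,R}\|f-f_m\|_{W^{1,1}}+\int_{B(0,R)}\bigl|T_\alpha f_m-K_nI_1|\nabla f_m|\bigr|.
\end{equation*}
Letting first $\alpha\to1^-$ (Step 3) and then $m\to\infty$ gives the claimed $L^1_{\mathrm{loc}}$ convergence.

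The main obstacle is Step 2: a bound uniform in $\alpha$ is required. It works because $B(0,R)$ is bounded, so the kernel $|x-y|^{\alpha-n}$ has $L^1(B(0,R))$ norm bounded independently of $y$ and $\alpha$. A global $L^1$ bound would fail, which is why the convergence is only local and the a.e. statement requires passing to subsequences.
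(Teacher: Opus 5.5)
Your argument is correct, but it takes a different route from the paper's.

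\textbf{What the paper does.} The paper uses the same density splitting into three terms. It controls the two error terms with the weak-type $(1,\frac{n}{n-\alpha})$ Hardy--Littlewood--Sobolev estimate for $I_\alpha$, with constant uniform for $\alpha\in(\tfrac12,1)$, combined with Proposition~\ref{prop:inverse} and Chebyshev's inequality. The smooth term is handled only through the pointwise convergence of Theorem~\ref{thm:1}, using that pointwise convergence on a set of finite measure implies convergence in measure. The outcome is convergence in measure on compact sets, from which an a.e.\ convergent subsequence is extracted.

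\textbf{What you do differently.} You replace the weak-type estimate by the elementary bound $\sup_{y}\int_{B(0,R)}|x-y|^{\alpha-n}\,dx\le C_n\max(R,R^{1/2})$, obtained via the rearrangement comparison with $B(y,R)$ and Tonelli. This gives a strong $L^1(\R^n)\to L^1(B(0,R))$ bound for $I_\alpha$, uniform in $\alpha\in(\tfrac12,1)$. The price is that in the smooth case you need more than pointwise convergence: you need $\sup_{x\in\R^n,\,\alpha\in(1/2,1)}(1-\alpha)I_\alpha(\mathcal D^\alpha f_m)(x)<\infty$ in order to apply dominated convergence in $x$. You obtain this correctly by dominating $(1-\alpha)\mathcal D^\alpha f_m$ by an $\alpha$-independent function in $L^1\cap L^\infty$. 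Near the support this uses Lemma~\ref{lem:2}; away from it, the decay $\|f_m\|_{L^1}|y|^{-n-\alpha}$. You then split the kernel at $|x-y|=1$.

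\textbf{Comparison.} Your route gains a self-contained argument that avoids Hardy--Littlewood--Sobolev and the tracking of its constant in $\alpha$. It also gives a strictly stronger intermediate conclusion: $L^1_{\mathrm{loc}}$ convergence rather than local convergence in measure. You make explicit the diagonal extraction over the balls $B(0,R)$, which the paper leaves implicit. The paper's route, in turn, gives global weak-type control of the error terms on all of $\R^n$. It needs no quantitative information on the smooth approximants beyond the pointwise statement of Theorem~\ref{thm:1}.
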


\begin{proof}
	Let $f\in W^{1,1}(\R^n)$. By the density of smooth compactly supported functions, there exists a sequence $\{\varphi_k\}_{k=1}^\infty\subset C_c^\infty(\R^n)$ such that $\varphi_k\to f$ in $W^{1,1}(\R^n)$ (see for instance \cite{AdamsFournier, BrezisBook}). We first show that
	\begin{equation*}
		(1-\alpha) I_\alpha(\mathcal{D}^\alpha f)\to K_n I_1(|\nabla f|)
	\end{equation*}
	in measure on every compact set $E \subset \R^n$ as $\alpha\to 1^-$. 
	
	For every $k\in \mathbb{N}$ and every $\alpha\in(0,1)$, we use the triangle inequality to write
	\begin{align*}
		\left|(1-\alpha) I_\alpha(\mathcal{D}^\alpha f)(x)-K_n I_1(|\nabla f|)(x)\right|
		\le &\ (1-\alpha)\left|I_\alpha(\mathcal{D}^\alpha f)(x)-I_\alpha(\mathcal{D}^\alpha \varphi_k)(x)\right|\\
		&+\left|(1-\alpha)I_\alpha(\mathcal{D}^\alpha \varphi_k)(x)-K_n I_1(|\nabla \varphi_k|)(x)\right|\\
		&+K_n\left|I_1(|\nabla \varphi_k|)(x)-I_1(|\nabla f|)(x)\right|\\
		=:&\ A_{1,\alpha,k}(x)+A_{2,\alpha,k}(x)+A_{3,k}(x).
	\end{align*}
	We begin by bounding the first term. Combining the weak-type estimate for the Riesz potential with Proposition \ref{prop:inverse} for $\alpha \in (\tfrac12, 1)$, we obtain 
	\begin{align*}
		\|A_{1,\alpha,k}\|_{L^{\frac{n}{n-\alpha},\infty}(\R^n)}
		&\le (1-\alpha)\|I_\alpha(\mathcal{D}^\alpha(f-\varphi_k))\|_{L^{\frac{n}{n-\alpha},\infty}(\R^n)}\\
		&\le C_n (1-\alpha)\int_{\R^n}\mathcal{D}^\alpha(f-\varphi_k)(x)\,dx\\
		&\le C_n \|f-\varphi_k\|_{W^{1,1}(\R^n)}.
	\end{align*}
	Similarly, for the third term,
	\begin{align*}
		\|A_{3,k}\|_{L^{\frac{n}{n-1},\infty}(\R^n)}	&\le  K_n \|I_1(|\nabla \varphi_k-\nabla f|)\|_{L^{\frac{n}{n-1},\infty}(\R^n)}\\
		&\le C_n \|\nabla \varphi_k-\nabla f\|_{L^1(\R^n)}\\
		&= C_n \|\nabla (\varphi_k- f)\|_{L^1(\R^n)}\\
		&\le C_n \|f-\varphi_k\|_{W^{1,1}(\R^n)}.
	\end{align*}
	Given any $\varepsilon>0$ and $\eta>0$, since $\varphi_k \to f$ in $W^{1,1}(\R^n)$, we can choose $k$ sufficiently large such that
	\begin{equation*}
		\frac{C_n\|f-\varphi_k\|_{W^{1,1}(\R^n)}}{\varepsilon} < \min\left\{ 1,\, \frac{\eta}{3} ,\, 
		\left(\frac{\eta}{3}\right)^{\frac{n-1}{n}}\right\}. 
	\end{equation*}
	Fix a compact set $E\subset \R^n$. Since $\frac{n}{n-\alpha}\ge 1$ for every $\alpha\in(0,1)$, applying Chebyshev's inequality to $A_{1,\alpha,k}$ we have
	\begin{align*}
		\left|\left\{x\in E:\ A_{1,\alpha,k}(x)>\varepsilon\right\}\right|
		&\le \left|\left\{x\in \R^n:\ A_{1,\alpha,k}(x)>\varepsilon\right\}\right|\\
		&\le \left(\frac{\|A_{1,\alpha,k}\|_{L^{\frac{n}{n-\alpha},\infty}(\R^n)}}{\varepsilon}\right)^{\frac{n}{n-\alpha}} \\
		&\le \left(\frac{C_n\|f-\varphi_k\|_{W^{1,1}(\R^n)}}{\varepsilon}\right)^{\frac{n}{n-\alpha}}\\
		& < \frac{\eta}{3}.
	\end{align*}
	Applying Chebyshev's inequality to $A_{3,k}$, we also obtain
	\begin{align*}
			\left|\left\{x\in E:\ A_{3,k}(x)>\varepsilon\right\}\right|  \le & \left(\frac{\|A_{3,k}\|_{L^{\frac{n}{n-1},\infty}(\R^n)}}{\varepsilon}\right)^{\frac{n}{n-1}} \\
			\le & \left(\frac{C_n \|f-\varphi_k\|_{W^{1,1}(\R^n)}}{\varepsilon}\right)^{\frac{n}{n-1}} \\ 
			<& \frac{\eta}{3}.
	\end{align*}
	For this fixed choice of $k$, since $\varphi_k \in C_c^\infty(\mathbb{R}^n)$, we may apply Theorem~\ref{thm:1} to conclude that $A_{2,\alpha,k}(x) \to 0$ for every $x \in \mathbb{R}^n$ as $\alpha \to 1^-$. Since $|E|<\infty$, pointwise convergence implies convergence in measure on $E$. Thus, there exists $\alpha_0\in(\tfrac12,1)$ (depending on $\varepsilon$ and $\eta$) such that for every $\alpha\in(\alpha_0,1)$,
	\begin{equation*}
		\left|\left\{x\in E:\ A_{2,\alpha,k}(x)>\varepsilon\right\}\right|<\frac{\eta}{3}.
	\end{equation*}
	Combining the estimates for $A_{1,\alpha,k}$, $A_{2,\alpha,k}$, and $A_{3,k}$ yields
	\begin{equation*}
		\left|\left\{x\in E:\ \left|(1-\alpha) I_\alpha(\mathcal{D}^\alpha f)(x)-K_n I_1(|\nabla f|)(x)\right|>3\varepsilon\right\}\right|<\eta
	\end{equation*}
	for every $\alpha\in(\alpha_0,1)$. Since $\varepsilon, \eta>0$ were arbitrary, this proves that $(1-\alpha) I_\alpha(\mathcal{D}^\alpha f)\to K_n I_1(|\nabla f|)$ in measure on every compact set $E\subset \R^n$ as $\alpha\to1^-$.
	
	Finally, let $\{\alpha_k\}_{k}\subset(0,1)$ be any sequence such that $\alpha_k\to1^-$. By the previous step, the sequence $\{(1-\alpha_k) I_{\alpha_k}(\mathcal{D}^{\alpha_k}f)\}_{k}$ converges to $K_n I_1(|\nabla f|)$ in measure on every compact subset of $\R^n$. Therefore, there exists a subsequence $\{\alpha_{k_j}\}_{j}$ such that
	\begin{equation*}
		(1-\alpha_{k_j}) I_{\alpha_{k_j}}(\mathcal{D}^{\alpha_{k_j}}f)(x) \to K_n I_1(|\nabla f|)(x)
	\end{equation*}
	for almost every $x\in\R^n$. This completes the proof.
\end{proof}

\section{Further extensions}

In this final section, we record a natural $L^p$-variant of the nonlinear fractional differential operator. For $0<\alpha<1$ and $p\ge1$, we define
\begin{equation*}
\mathcal{D}^\alpha_p f(x):=\left(\int_{\R^n}\frac{|f(x)-f(y)|^p}{|x-y|^{n+\alpha p}}\,dy\right)^{\frac{1}{p}}.
\end{equation*}
This operator (in particular for $p=2$) appears, for instance, in \cite{NahasPonce}. This operator also behaves as a differential operator; it satisfies the following Leibniz-type inequality \begin{equation*}
	\norm{\mathcal{D}^\alpha_p (fg)}_{L^p(\R^n)}\le \norm{f\, \mathcal{D}^\alpha_p g}_{L^p(\R^n)}+\norm{g\, \mathcal{D}^\alpha_p f}_{L^p(\R^n)}.
\end{equation*}

We next state the corresponding pointwise BBM limit for the Riesz potential of $\mathcal{D}^\alpha_p$.

\begin{theorem}
Let $p\ge1$ and let $f\in C_c^\infty(\R^n)$. Then for every $x\in\R^n$,
\begin{equation*}
\lim_{\alpha\to 1^-} (1-\alpha)^{\frac{1}{p}}\, I_\alpha(\mathcal{D}^\alpha_p f)(x) =K_{n,p}\, I_1(|\nabla f|)(x),
\end{equation*}
where
\begin{equation*}
	K_{n,p}:=\left( \frac{1}{p} \int_{\mathbb{S}^{n-1}}|\omega\cdot e|^p\,d\sigma(\omega)   \right)^\frac{1}{p} .
\end{equation*}
\end{theorem}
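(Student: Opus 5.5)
The plan is to mirror the proof of Theorem \ref{thm:1}: first establish the $L^p$ analogues of Lemma \ref{lem:1} and Lemma \ref{lem:2}, then conclude by dominated convergence. By translation invariance we reduce to $x=0$, exactly as before.

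\emph{Step 1 (pointwise limit).} For $f\in C_c^2(\R^n)$ we show
\[
\lim_{\alpha\to1^-}(1-\alpha)\bigl(\mathcal{D}^\alpha_p f(y)\bigr)^p = \frac{1}{p}\int_{\mathbb{S}^{n-1}}|\nabla f(y)\cdot\omega|^p\,d\sigma(\omega)=K_{n,p}^p|\nabla f(y)|^p .
\]
Split the integral into $|h|<1$ and $|h|\ge1$. The nonlocal part is bounded by $2^p\|f\|_{L^\infty}^p(1-\alpha)\int_1^\infty r^{-1-\alpha p}\,dr\to0$. In the local part, write $f(y+h)-f(y)=\nabla f(y)\cdot h+R(h)$ with $|R(h)|\le C|h|^2$. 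Instead of the elementary inequality used for $p=1$, we use $\bigl||a+b|^p-|a|^p\bigr|\le p(|a|+|b|)^{p-1}|b|$. Since $|a|+|b|\le C|h|$, the error is $O(|h|^{p+1})$, and its contribution is $O\bigl((1-\alpha)\int_0^1 r^{p-\alpha p}\,dr\bigr)\to0$. The main term in polar coordinates gives
\[
(1-\alpha)\int_0^1 r^{p-1-\alpha p}\,dr\int_{\mathbb{S}^{n-1}}|\nabla f\cdot\omega|^p\,d\sigma=\frac{1}{p}\int_{\mathbb{S}^{n-1}}|\nabla f\cdot\omega|^p\,d\sigma .
\]
This is where the factor $1/p$ comes from. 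Homogeneity and rotation invariance then give $K_{n,p}^p|\nabla f(y)|^p$, and taking $p$-th roots yields $(1-\alpha)^{1/p}\mathcal{D}^\alpha_pf(y)\to K_{n,p}|\nabla f(y)|$.

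\emph{Step 2 (uniform bound).} For $\alpha\in(\tfrac12,1)$, the mean value theorem on $|h|<1$ and boundedness on $|h|\ge1$ give
\[
(1-\alpha)(\mathcal{D}^\alpha_p f)^p\le C_{n,p}\bigl(\|\nabla f\|_\infty^p+\|f\|_\infty^p\bigr),
\]
since $(1-\alpha)\int_0^1r^{p-1-\alpha p}\,dr=1/p$ and $(1-\alpha)/(\alpha p)\le 2/p$. Hence $(1-\alpha)^{1/p}\mathcal{D}^\alpha_pf\le C_{n,p,f}$ uniformly.

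\emph{Step 3 (domination).} Set $F_\alpha(y)=\gamma_{n,\alpha}(1-\alpha)^{1/p}\mathcal{D}^\alpha_pf(y)|y|^{\alpha-n}$ and use the same decomposition $A_1,A_2,A_3$ as in the proof of Theorem \ref{thm:1}. On $A_1$ and $A_3$, Step 2 gives the bounds $C|y|^{-n+1/2}$ and $C$ respectively. The region $A_2$ is where I expect the only real change. For $|y|\ge2R_0+1$ we have $f(y)=0$ and $|y-z|\ge|y|/2$ on the support, so
\[
(\mathcal{D}^\alpha_pf(y))^p\le 2^{n+\alpha p}\|f\|_{L^p}^p|y|^{-n-\alpha p},
\]
hence $\mathcal{D}^\alpha_pf(y)\le C|y|^{-n/p-\alpha}$. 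Therefore $F_\alpha(y)\le C|y|^{-n-n/p}\le C|y|^{-n-n/p}$, using $|y|\ge1$ and $\gamma_{n,\alpha}$ bounded near $\alpha=1$; this is integrable at infinity because $n/p>0$. Dominated convergence, together with Step 1 and the continuity of $\gamma_{n,\alpha}$, then yields the claim.

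The main obstacle is the remainder estimate in Step 1 for general $p$. If $1\le p<2$ and $\nabla f(y)\ne0$, the inequality above still works because $(|a|+|b|)^{p-1}\le C|h|^{p-1}$ for $p\ge1$, so no separate case analysis is needed. The case $\nabla f(y)=0$ is covered by the same bound, since then the main term vanishes and the error term alone tends to zero.
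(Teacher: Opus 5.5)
Your proposal is correct and follows the paper's own sketch: reduction to $x=0$, domination on the same regions $A_1,A_2,A_3$ (with the decay $|y|^{-n-n/p}$ on $A_2$), and dominated convergence. The one difference is that you prove the pointwise limit $(1-\alpha)^{1/p}\mathcal{D}^\alpha_p f\to K_{n,p}|\nabla f|$ directly, via Taylor expansion and $\bigl||a+b|^p-|a|^p\bigr|\le p(|a|+|b|)^{p-1}|b|$, whereas the paper cites \cite[Lemma 13]{Kaushik`} for it; note also that, as in Theorem \ref{thm:1}, you implicitly use $n\ge 2$ so that $I_1$ is defined and $\gamma_{n,\alpha}$ stays bounded as $\alpha\to1^-$.
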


\begin{proof}[Sketch of the proof]
	The proof follows the strategy of Theorem \ref{thm:1}. By translation invariance, it suffices to consider $x=0$. Setting
\begin{equation*}
	F_{\alpha,p}(y):=\gamma_{n,\alpha}\,(1-\alpha)^{\frac1p}\,\frac{\mathcal{D}^\alpha_p f(y)}{|y|^{n-\alpha}},
\qquad
F(y):=K_{n,p}\,\gamma_{n,1}\,\frac{|\nabla f(y)|}{|y|^{n-1}},
\end{equation*}
it is enough to justify that $\lim_{\alpha\to 1^-}\int_{\R^n}F_{\alpha,p}=\int_{\R^n}F$. The pointwise convergence $F_{\alpha,p}(y)\to F(y)$ is provided by \cite[Lemma 13]{Kaushik} (in place of Lemma~\ref{lem:1}) together with the continuity of $\gamma_{n,\alpha}$ in $\alpha$. The required domination to apply the dominated convergence theorem is obtained by the same splitting estimates as in the proof of Theorem \ref{thm:1}, adapted to $\mathcal{D}^\alpha_p f$.
\end{proof}

%\section*{Acknowledgment}

%\section*{Conflicts of Interest}
%The author has no conflicts of interest to declare.

%\section*{Data Availability Statement}
%Data sharing is not applicable to this article, as no datasets were generated or analyzed during the current study.

\end{document}